\documentclass[11pt,a4paper]{article}

\usepackage[utf8]{inputenc}
\usepackage{amsmath,amssymb,amsthm,mathtools}
\usepackage{fullpage}
\usepackage{hyperref}
\usepackage{cite}

\theoremstyle{plain}
\newtheorem{theorem}{Theorem}[section]
\newtheorem{lemma}[theorem]{Lemma}
\newtheorem{proposition}[theorem]{Proposition}
\newtheorem{corollary}[theorem]{Corollary}

\theoremstyle{definition}
\newtheorem{remark}[theorem]{Remark}

\newcommand{\R}{\mathbb{R}}
\newcommand{\C}{\mathbb{C}}
\newcommand{\spec}{\sigma}

\title{\textbf{Proof of a Conjecture of De Cock and De Moor}}

\author{Jeffrey Humpherys\\Department of Mathematics and Statistics\\Missouri University of Science and Technology\\Rolla, Missouri}
\date{\today}

\begin{document}
\maketitle

\begin{abstract}
De Cock and De Moor proposed a conjecture connecting two seemingly different viewpoints in stochastic subspace identification, one based on Lyapunov equations and the other on principal angles and canonical correlations. The conjecture was recorded as Problem 9.1 of \emph{Unsolved Problems in Mathematical Systems and Control Theory}. We give a direct finite-dimensional proof under the natural nonresonance condition, without requiring stability or diagonalizability. The key mechanism is the rank-one perturbation, which exposes a hidden Cauchy-matrix structure and reduces the problem to rational interpolation. A density and continuity argument then removes the generic spectral assumptions. The result strengthens the original statement. The eigenvalues agree with algebraic multiplicity, a nonsingularity assumption of the original formulation becomes automatic, and on a dense open set of parameters the two matrices are similar rather than merely cospectral. While this manuscript was being prepared, Gillberg and L\"ofberg independently posted a proof based on a Lyapunov-kernel identity and the classical $AB$--$BA$ principle. The proof given here was developed independently and follows a different route.
\end{abstract}

\medskip
\noindent\textbf{Keywords.} Stein equation; Lyapunov equation; Cauchy matrix; rational interpolation; rank-one perturbation; principal angles; canonical correlations; subspace identification.

\smallskip
\noindent\textbf{MSC 2020.} Primary 15A24; Secondary 15A18, 15B05, 30J10, 93B30.

\section{Introduction}

A basic question in stochastic subspace identification is how the geometry of model subspaces reflects the statistical dependence between the past and future of a stochastic process. Principal angles provide a geometric measure of the separation between subspaces, while canonical correlations measure the dependence between collections of random variables. These ideas have classical origins in the work of Jordan and Hotelling \cite{Jordan1875,Hotelling1936}, but they come together naturally in linear stochastic models. Subspace identification recovers a system from structured subspaces built from observed data, whereas the canonical correlations between past and future measure the memory carried by the process and, in the Gaussian case, determine their mutual information \cite{GelfandYaglom1959,JewellBloomfield1983}. De Cock and De Moor observed that these two viewpoints should be linked by a remarkably simple matrix identity. Establishing that identity would show that the nonzero past--future canonical correlations of a scalar autoregressive moving-average model are exactly the sines of certain principal angles associated with the model and its whitening inverse, and would also yield short proofs of identities underlying cepstral distance measures for comparing and clustering time series \cite{Martin2000,DeCock2002SCL,Boets2005}. They formulated this connection as a conjecture in the 2002 MTNS Problem Book and later as Problem 9.1 of \emph{Unsolved Problems in Mathematical Systems and Control Theory}, deliberately isolating it as a finite-dimensional matrix-algebraic problem \cite{DeCock2002MTNS,DeCock2004}. While this manuscript was being prepared, Gillberg and L\"ofberg independently posted another direct finite-dimensional proof based on a Lyapunov-kernel identity and the classical $AB$--$BA$ principle \cite{GillbergLofberg2026}.

The truth of the spectral statement is not entirely without support in the literature. In the stable, minimum-phase setting of its system-theoretic interpretation, De Cock and De Moor remark in \cite{DeCock2004} that the identity can be established ``via a large detour'' through the operator-theoretic results of \cite{DeCock2002PhD}. Scherrer treated the associated canonical-correlation relation in the stable setting through minimum-phase balanced truncation \cite{Scherrer2002}. In 2005, Boets, De Cock, Espinoza, and De Moor still described a direct proof as ``a challenge to the reader'' in connection with a related principal-angle identity \cite{Boets2005}. The proof given here was developed independently and proceeds through Cauchy matrices and rational interpolation. We return to the Gillberg--L\"ofberg result in the discussion. The argument below requires neither stability nor minimum phase nor diagonalizability.

The precise statement is as follows. Given a real matrix $A\in\R^{n\times n}$ and vectors $v,w\in\R^n$, set $B=A+vw^T$. The original formulation is built on the single block Stein (discrete-time Lyapunov) equation
\begin{equation}
\label{eq-blockstein}
 \begin{pmatrix}P&R\\ R^T&Q\end{pmatrix}
 -\begin{pmatrix}A&0\\0&B^T\end{pmatrix}
 \begin{pmatrix}P&R\\ R^T&Q\end{pmatrix}
 \begin{pmatrix}A^T&0\\0&B\end{pmatrix}
 =\begin{pmatrix}v v^T & v w^T\\w v^T & w w^T\end{pmatrix}.
\end{equation}
Under a nonresonance hypothesis on the spectra of $A$ and $B$, made precise in Section~2, this equation has a unique solution, and uniqueness forces the solution to be symmetric. The block equation reduces to the three Stein equations
\begin{equation}
\label{eq-stein}
 P-APA^T=vv^T,
 \qquad
 Q-B^TQB=ww^T,
 \qquad
 R-ARB=vw^T.
\end{equation}
The conjecture asserts that if $P$, $Q$, and $I+PQ$ are nonsingular, then the matrices $P^{-1}RQ^{-1}R^T$ and $(I+PQ)^{-1}$ have the same eigenvalues.

The conjecture is striking because the two matrices have rather different forms. One is built from the cross solution $R$ together with both $P$ and $Q$, while the other depends only on the product $PQ$. A direct proof should therefore identify a common algebraic structure rather than attempt to force a pointwise equality between the two matrices.

The proof below proceeds in two stages. First, on a generic set where the spectra of $A$ and $A+vw^T$ are simple and nonzero, we diagonalize the two matrices independently over the complex numbers. The blocks of the Stein solution become Cauchy matrices compressed by diagonal factors, and the rank-one structure enters through a factorization of the change-of-basis matrix between the two eigenbases. A single rational-interpolation identity then shows that the two matrices in the conjecture are similar to the same explicitly constructed matrix. Second, we remove the auxiliary spectral assumptions by density and continuity. The Stein solutions vary continuously throughout the nonresonant regime; taking determinants of the generic similarity gives the nonsingularity needed at the limit, and equality of characteristic polynomials then passes directly to arbitrary nonresonant data.

The argument gives two useful strengthenings of the original formulation. The nonsingularity of $I+PQ$ need not be assumed separately, and $R$ is automatically nonsingular. On the generic set the two target matrices are similar rather than merely cospectral. Combining the argument with the Gillberg--L\"ofberg intertwiner extends the similarity to the whole domain of the theorem, as explained in Section~\ref{sec-discussion}.

The argument is entirely finite-dimensional. It does not assume that $A$ or $A+vw^T$ is Schur stable, and it uses no Neumann-series representation. The appearance of Cauchy matrices is closely related to the classical algebraic structure of Lyapunov and Stein equations in companion form; see, for example, \cite{HeinigJungnickel1986}.

This paper is organized as follows. In Section 2, we state the conjecture and the nonresonance hypothesis. In Section 3, we reduce the blocks of the Stein solution to Cauchy matrices on the simple-spectrum set. In Section 4, we prove the rational interpolation identity and derive the two Cauchy identities needed in the proof. In Section 5, we prove the conjecture on the generic set. In Section 6, we remove the generic assumptions by density and continuity, and in Section 7 we conclude with a discussion.

\section{The De Cock--De Moor conjecture}

For a real matrix $A\in\R^{n\times n}$ and vectors $v,w\in\R^n$, set $B=A+vw^T$. The \emph{nonresonance hypothesis} states that for $\alpha_i\in\spec(A)$ and $\beta_j\in\spec(B)$, none of the products $\alpha_i\alpha_j$, $\beta_i\beta_j$, or $\alpha_i\beta_j$ is equal to $1$. Following the definition in \eqref{eq-stein}, the nonresonance hypothesis makes each of these equations uniquely solvable, giving matrices $P,Q,R\in\R^{n\times n}$. Indeed, after vectorization their coefficient matrices are respectively
\[
 I-A\otimes A,\qquad I-B^T\otimes B^T,\qquad I-B^T\otimes A,
\]
and their eigenvalues are $1-\alpha_i\alpha_j$, $1-\beta_i\beta_j$, and $1-\alpha_i\beta_j$. Uniqueness also implies that $P=P^T$ and $Q=Q^T$, since transposition leaves their respective Stein equations unchanged.

The conjecture of De Cock and De Moor is the following. Our formulation removes the separate assumption that $I+PQ$ be nonsingular.

\begin{theorem}[De Cock--De Moor conjecture]\label{thm-main}
Under the nonresonance hypothesis, suppose that $P$ and $Q$ are nonsingular. Then $R$ and $I+PQ$ are also nonsingular, and
\begin{equation}
\label{eq-main-spectrum}
\spec\!\left(P^{-1}RQ^{-1}R^T\right) = \spec\!\left((I+PQ)^{-1}\right),
\end{equation}
where eigenvalues are counted with algebraic multiplicity. Equivalently, the two matrices have the same characteristic polynomial. Additionally, if $A$ and $B$ have simple, nonzero spectra, then the two matrices are similar.
\end{theorem}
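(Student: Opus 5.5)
We argue in two stages, following the outline given in the introduction. Throughout the first stage we assume that $A$ and $B=A+vw^T$ have simple, nonzero spectra $\{\alpha_i\}$ and $\{\beta_j\}$.

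\textbf{Stage 1: the generic case.} Diagonalize over $\C$ by writing $A=X\,\mathrm{diag}(\alpha)X^{-1}$ and $B=Y\,\mathrm{diag}(\beta)Y^{-1}$. Set $a=X^{-1}v$ and $b=Y^{T}w$.

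The three Stein equations then become entrywise in the new coordinates:
\[
\tilde P_{ij}=\frac{a_ia_j}{1-\alpha_i\alpha_j},\qquad
\tilde Q_{ij}=\frac{b_ib_j}{1-\beta_i\beta_j},\qquad
\tilde R_{ij}=\frac{a_i c_j}{1-\alpha_i\beta_j}.
\]
Here $c$ is the appropriate transformed copy of $w$. Each is a Cauchy matrix compressed by diagonal factors.

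The rank-one link comes from the intertwining $B-A=vw^T$. Evaluating $w^T(zI-A)^{-1}v$ gives
\[
\frac{\det(zI-B)}{\det(zI-A)}=1-w^T(zI-A)^{-1}v,
\]
so $a_i\tilde w_i$ equals the residue of $-\prod_k(\alpha_i-\beta_k)/\prod_{k\ne i}(\alpha_i-\alpha_k)$. In addition, the change-of-basis matrix $X^{-1}Y$ is itself a diagonally scaled Cauchy matrix with entries $\propto 1/(\alpha_i-\beta_j)$.

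These facts let us express $P^{-1}RQ^{-1}R^T$ and $I+PQ$, after conjugation by $X$, as products of Cauchy matrices in the nodes $\alpha_i$, $1/\alpha_i$, $\beta_j$ and $1/\beta_j$. Nonresonance guarantees that all denominators are nonzero.

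The key tool is a rational interpolation identity: partial fractions for a rational function such as
\[
f(z)=\prod_k\frac{(z-\beta_k)(1-z\beta_k)}{(z-\alpha_k)(1-z\alpha_k)}
\]
yield explicit inverses of these Cauchy matrices and product formulas of the type $C(\alpha,\beta)D\,C(\beta,\gamma)=C(\alpha,\gamma)D'$.

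Using these formulas, I would show that both $\tilde P^{-1}\tilde R\tilde Q^{-1}\tilde R^T$ and $(I+\tilde P\tilde Q)^{-1}$ are similar to one explicit matrix $M$. A candidate is a diagonal-times-Cauchy matrix in the nodes $\alpha_i$ and $1/\alpha_j$. This proves similarity on the generic set.

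Taking determinants gives closed Cauchy-determinant formulas for $\det R$, $\det P$, $\det Q$ and $\det(I+PQ)$. These are products of factors $(\alpha_i-\beta_j)$, $(1-\alpha_i\beta_j)$ and $a_i,b_j$. They show that whenever $P$ and $Q$ are nonsingular, $R$ and $I+PQ$ are as well, and they give the identity
\[
\det(I+PQ)\,\det(R)^2=\det P\,\det Q .
\]
I expect this step to be the main obstacle: finding the right common matrix $M$ and verifying the two Cauchy product identities. I would first test the case $n=1$ and $n=2$ symbolically to guess $M$.

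\textbf{Stage 2: density and continuity.} The nonresonant set is open. Within it, simple nonzero spectra of both $A$ and $A+vw^T$ form a dense open subset, because their complement is the zero set of nontrivial discriminant polynomials and of $\det A\,\det B$ in $(A,v,w)$. The solutions $P,Q,R$ depend continuously (indeed rationally) on the data there.

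The determinant identity $\det(I+PQ)\det(R)^2=\det P\det Q$ is polynomial after clearing denominators, so it extends to the whole nonresonant set. Hence nonsingularity of $P$ and $Q$ forces nonsingularity of $R$ and $I+PQ$ everywhere. The same holds on a neighborhood of any such point, so both target matrices depend continuously on the data there.

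Finally, the polynomial identity $\det(\lambda I-P^{-1}RQ^{-1}R^T)=\det(\lambda I-(I+PQ)^{-1})$ holds on a dense subset of that neighborhood. By continuity it holds at the limit point, which gives equality of the spectra with algebraic multiplicity.
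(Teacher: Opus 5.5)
\textbf{Verdict: genuine gap.} Your Stage~2 is sound and parallels the paper's density--continuity argument. The paper takes limits only inside the open set where $P$ and $Q$ are nonsingular. Your extension of $\det(I+PQ)\det(R)^2=\det P\det Q$ to all nonresonant data also works, provided you note that $\det P\det Q$ is a nonzero rational function, so the set where Stage~1 applies is dense.

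The gap is Stage~1, where all the content lies. You never identify the common matrix, and you prove no identity showing that $P^{-1}RQ^{-1}R^T$ and $(I+PQ)^{-1}$ are similar to it. You propose to guess it from $n=1,2$, and the partial-fraction and product formulas you cite are never tied to a specific identity that closes the argument.

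Everything downstream depends on this step. $\det(I+PQ)$ is not a Cauchy determinant on its face: computing it requires an identity for $G_\alpha+CG_\beta^{-1}C^T$ (notation below), which is exactly the missing step. So neither $\det(I+PQ)\det(R)^2=\det P\det Q$ nor the nonsingularity of $I+PQ$ is available yet. In the paper both come from taking determinants of the generic similarity.

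There is also a smaller error. Nonresonance does not make the denominators $\alpha_i-\beta_j$ nonzero. That comes from $(\beta_j-\alpha_i)(X^{-1}Y)_{ij}=a_ib_j$, together with the fact that nonsingularity of $P$ and $Q$ forces every $a_i$ and $b_j$ to be nonzero (a zero $a_i$ kills a row of $X^{-1}PX^{-T}$).

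The missing idea is a single bilinear kernel identity. For distinct nodes $\beta_r$ with $\beta_i\beta_j\neq1$, put $k(z)=\bigl((1-\beta_rz)^{-1}\bigr)_{r=1}^n$, $\phi(z)=\prod_r(z-\beta_r)/(1-\beta_rz)$ and $G_\beta=[1/(1-\beta_i\beta_j)]$. Then $k(z)^TG_\beta^{-1}k(\zeta)=\bigl(1-\phi(z)\phi(\zeta)\bigr)/(1-z\zeta)$, proved by checking that two polynomials of degree less than $n$ agree at the $n$ nodes.

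Now set $G_\alpha=[1/(1-\alpha_i\alpha_j)]$, $H=[1/(1-\alpha_i\beta_j)]$, $C=[1/(\beta_j-\alpha_i)]$, $D_a=\operatorname{diag}(a_i)$ and $D_b=\operatorname{diag}(b_j)$, so that $X^{-1}Y=D_aCD_b$. Your coordinates then give $P^{-1}RQ^{-1}R^T\sim L=G_\alpha^{-1}HG_\beta^{-1}H^T$ and $PQ\sim K=G_\alpha C^{-T}G_\beta C^{-1}$.

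Evaluating the identity at $(\alpha_i,\alpha_j)$ gives $HG_\beta^{-1}H^T=G_\alpha-\Phi G_\alpha\Phi$ with $\Phi=\operatorname{diag}(\phi(\alpha_i))$. Applying it to the reciprocal nodes $1/\beta_r$ gives $CG_\beta^{-1}C^T=\Phi^{-1}G_\alpha\Phi^{-1}-G_\alpha$. Hence $L=I-G_\alpha^{-1}\Phi G_\alpha\Phi$ and $I+K=\Phi^{-1}G_\alpha\bigl(HG_\beta^{-1}H^T\bigr)^{-1}\Phi\sim L^{-1}$. So both target matrices are similar to $I-G_\alpha^{-1}\Phi G_\alpha\Phi$. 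Without this step, or an equivalent explicit computation, your proposal is a strategy rather than a proof.
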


\section{Cauchy structure on the simple-spectrum set}

Throughout this paper, we make the following assumptions in addition to the nonresonance hypothesis.
\begin{itemize}
 \item[(S1)] $A$ and $B$ each have $n$ distinct eigenvalues, all nonzero;
 \item[(S2)] $P$ and $Q$ are nonsingular.
\end{itemize}
The eigenvalues may be complex. All transposes below are plain transposes, also for complex matrices; no complex conjugation occurs anywhere, and all identities are bilinear.

By (S1) we may fix diagonalizations
\begin{equation}\label{eq-diag}
 A=S\Lambda S^{-1},
 \qquad
 B=T\Gamma T^{-1},
 \qquad
 \Lambda=\operatorname{diag}(a_1,\ldots,a_n),
 \quad
 \Gamma=\operatorname{diag}(b_1,\ldots,b_n),
\end{equation}
with $S,T\in\C^{n\times n}$ nonsingular. Set
\begin{equation}\label{eq-xy}
 x=S^{-1}v,\qquad y=T^Tw,
 \qquad
 D_x=\operatorname{diag}(x_1,\ldots,x_n),
 \quad
 D_y=\operatorname{diag}(y_1,\ldots,y_n),
\end{equation}
and transform the Stein solutions two-sidedly.
\begin{equation}\label{eq-hats}
 \widehat P=S^{-1}PS^{-T},
 \qquad
 \widehat Q=T^TQT,
 \qquad
 \widehat R=S^{-1}RT.
\end{equation}
Substituting \eqref{eq-diag} into \eqref{eq-stein} and using $\Lambda^T=\Lambda$ and $\Gamma^T=\Gamma$ gives
\begin{equation}\label{eq-hatstein}
 \widehat P-\Lambda\widehat P\Lambda=xx^T,
 \qquad
 \widehat Q-\Gamma\widehat Q\Gamma=yy^T,
 \qquad
 \widehat R-\Lambda\widehat R\Gamma=xy^T.
\end{equation}
Define the Cauchy matrices
\begin{equation}\label{eq-CauchyMatrices}
 G_A=\Bigl[\frac{1}{1-a_ia_j}\Bigr]_{i,j=1}^n,
 \qquad
 G_B=\Bigl[\frac{1}{1-b_ib_j}\Bigr]_{i,j=1}^n,
 \qquad
 H=\Bigl[\frac{1}{1-a_ib_j}\Bigr]_{i,j=1}^n,
\end{equation}
whose denominators are nonzero by nonresonance. Their nonsingularity follows from the generalized Cauchy determinant
\begin{equation}\label{eq-general-cauchy-det}
 \det\Bigl[\frac{1}{1-s_it_j}\Bigr]_{i,j=1}^n
 =\frac{\prod_{i<j}(s_i-s_j)(t_i-t_j)}{\prod_{i,j}(1-s_it_j)},
\end{equation}
valid for complex parameters with the displayed ordering of the Vandermonde factors. The choices $(s,t)=(a,a)$, $(b,b)$, and $(a,b)$, together with (S1), show that $G_A$, $G_B$, and $H$ are nonsingular.

\begin{proposition}
\label{prop-factorizations}
Under (S1),
\begin{align}
 \widehat P&=D_xG_AD_x, \label{eq-Pfactor}\\
 \widehat Q&=D_yG_BD_y, \label{eq-Qfactor}\\
 \widehat R&=D_xHD_y. \label{eq-Rfactor}
\end{align}
If in addition (S2) holds, then every entry of $x$ and of $y$ is nonzero, so $D_x$ and $D_y$ are nonsingular.
\end{proposition}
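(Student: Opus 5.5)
The plan is to solve the three diagonal Stein equations \eqref{eq-hatstein} entrywise and then read off nonsingularity of $D_x$ and $D_y$ from the factorizations and (S2).

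\emph{The three factorizations.} Since $\Lambda$ and $\Gamma$ are diagonal, the $(i,j)$ entry of the first equation in \eqref{eq-hatstein} reads
\[
\widehat P_{ij}(1-a_ia_j)=x_ix_j .
\]
Nonresonance gives $1-a_ia_j\neq 0$, so
\[
\widehat P_{ij}=\frac{x_ix_j}{1-a_ia_j},
\]
which is exactly the $(i,j)$ entry of $D_xG_AD_x$. This proves \eqref{eq-Pfactor}. The same computation for the second equation gives $\widehat Q_{ij}=y_iy_j/(1-b_ib_j)$, which is \eqref{eq-Qfactor}. For the third, the $(i,j)$ entry is $\widehat R_{ij}(1-a_ib_j)=x_iy_j$, which gives \eqref{eq-Rfactor}.

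These entrywise solutions are legitimate because the transformed equations are equivalent to the original ones. The transformations in \eqref{eq-hats} are invertible, so uniqueness of the solutions of \eqref{eq-stein} transfers to \eqref{eq-hatstein}. In any case the entrywise solution is forced, since each entry equation has a nonzero coefficient.

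The only thing to check carefully is the derivation of \eqref{eq-hatstein} itself, which the statement already records. As a sanity check, write $B^T=T^{-T}\Gamma T^T$. Then $T^TB^TQBT=\Gamma\,(T^TQT)\,\Gamma$ and $T^Tww^TT=yy^T$, as required. Similarly, $S^{-1}ARBT=\Lambda\,\widehat R\,\Gamma$.

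\emph{Nonzero entries of $x$ and $y$.} Assume (S2). Then $\widehat P=S^{-1}PS^{-T}$ is nonsingular, because $S$ is nonsingular. We have
\[
\det\widehat P=(\det D_x)^2\det G_A=\Bigl(\prod_i x_i\Bigr)^2\det G_A ,
\]
so $\prod_i x_i\neq 0$. Hence every $x_i\neq0$ and $D_x$ is nonsingular. The same argument applied to $\widehat Q=T^TQT$ shows that every $y_i\neq 0$, so $D_y$ is nonsingular.

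This direction does not even need $G_A\neq0$ in determinant form: if some $x_i=0$, the $i$th row of $\widehat P$ vanishes, contradicting nonsingularity. The argument is entirely mechanical, and I expect no genuine obstacle. The only care needed is bookkeeping of plain (non-conjugate) transposes in the change of basis.
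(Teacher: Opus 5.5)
Your proposal is correct and matches the paper's proof: you solve the diagonalized Stein equations entrywise using nonresonance, and you show $x_i\neq0$ and $y_j\neq0$ by the same vanishing-row argument. Your determinant variant, $\det\widehat P=(\prod_i x_i)^2\det G_A$, is an equally valid alternative for that last step.
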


\begin{proof}
The equations \eqref{eq-hatstein} are diagonal and solve entrywise.
$\widehat P_{ij}(1-a_ia_j)=x_ix_j$, $\widehat Q_{ij}(1-b_ib_j)=y_iy_j$, and $\widehat R_{ij}(1-a_ib_j)=x_iy_j$, which are \eqref{eq-Pfactor}--\eqref{eq-Rfactor}. If some $x_i$ were zero, the $i$th row of $\widehat P=S^{-1}PS^{-T}$ would vanish, contradicting (S2); likewise for $y_j$ and $\widehat Q$.
\end{proof}

The rank-one structure of the perturbation enters through the change of basis between the two eigenbases, and only there.

\begin{lemma}
\label{lem-changeofbasis}
Assume (S1)--(S2) and let $Z=S^{-1}T$. Then $\spec(A)\cap\spec(B)=\varnothing$. Consequently,
\begin{equation}\label{eq-Cdef}
 C=\Bigl[\frac{1}{b_j-a_i}\Bigr]_{i,j=1}^n
\end{equation}
is well defined, $Z=D_xCD_y$, and $C$ is nonsingular.
\end{lemma}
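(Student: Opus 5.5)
The plan is to exploit the rank-one relation $B=A+vw^T$ directly in the eigenbases. Write $AT - TB' $ in the form
\[
 A\,T - T\,\Gamma = A T - BT = -vw^TT,
\]
and multiply on the left by $S^{-1}$. Using $S^{-1}A=\Lambda S^{-1}$, $S^{-1}v=x$ and $w^TT=y^T$, this gives the Sylvester-type identity
\[
 \Lambda Z - Z\Gamma = -x y^T,
\qquad\text{that is,}\qquad Z_{ij}(b_j-a_i)=x_iy_j .
\]
So everything follows once we know $b_j\neq a_i$ for all $i,j$: we can then divide to obtain $Z_{ij}=x_iy_j/(b_j-a_i)$, which is $Z=D_xCD_y$.

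\textbf{Step 1: the spectra are disjoint.} Suppose $\lambda\in\spec(A)\cap\spec(B)$, say $\lambda=a_i=b_j$. The $(i,j)$ entry of the identity above gives $0=x_iy_j$. But by Proposition~\ref{prop-factorizations}, under (S1)--(S2) every $x_i$ and every $y_j$ is nonzero, which is a contradiction. This is the step where (S2) is genuinely used, and it is the only subtle point. I would double-check the index conventions: $S^{-1}AT$ should be computed as $\Lambda S^{-1}T$ and $S^{-1}BT=S^{-1}T\Gamma$. A more conceptual alternative is this: a common eigenvalue means a left eigenvector $u^T$ of $A$ with $u^TA=\lambda u^T$ and a right eigenvector $t$ of $B$ with $Bt=\lambda t$. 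Then
\[
 u^T B t - u^T A t = (u^Tv)(w^Tt),
\]
while the left side equals $\lambda u^Tt-\lambda u^Tt=0$. Here $u^Tv=x_i$ up to the normalization built into $S$, and $w^Tt=y_j$. Both approaches are the same computation.

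\textbf{Step 2: the factorization and nonsingularity of $C$.} With disjoint spectra, $C$ is well defined, and the entrywise division above yields $Z=D_xCD_y$. Nonsingularity of $C$ then follows in either of two ways. First, $Z=S^{-1}T$ is invertible and $D_x,D_y$ are invertible, so $C=D_x^{-1}ZD_y^{-1}$ is invertible. Second, the classical Cauchy determinant $\prod_{i<j}(a_j-a_i)(b_i-b_j)/\prod_{i,j}(b_j-a_i)$ is nonzero by the simplicity in (S1). I would present the first argument and remark on the second.
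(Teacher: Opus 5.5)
Your proposal is correct and matches the paper's proof. The paper likewise derives $Z\Gamma-\Lambda Z=xy^T$ from $B-A=vw^T$, uses the nonvanishing of all $x_i,y_j$ from Proposition~\ref{prop-factorizations} to exclude $a_i=b_j$, and obtains nonsingularity of $C$ from $C=D_x^{-1}ZD_y^{-1}$. Your left/right eigenvector variant is essentially the argument the paper uses in Section~\ref{sec-discussion} to get spectral disjointness without (S1), and the Cauchy-determinant remark is a valid alternative for the last step.
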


\begin{proof}
From $B-A=vw^T$,
\[
 Z\Gamma-\Lambda Z
 =S^{-1}BT-S^{-1}AT
 =S^{-1}vw^TT
 =xy^T,
\]
so $(b_j-a_i)Z_{ij}=x_iy_j$. By Proposition~\ref{prop-factorizations} and (S2), every $x_i$ and $y_j$ is nonzero. Hence $a_i=b_j$ is impossible, since it would force $0=x_iy_j$. Thus the spectra of $A$ and $B$ are disjoint. The matrix $C$ is therefore well defined, and $Z_{ij}=x_iC_{ij}y_j$. Since $Z=S^{-1}T$, $D_x$, and $D_y$ are nonsingular, so is $C$.
\end{proof}

\begin{corollary}
\label{cor-reduced-targets}
Assume (S1)--(S2) and define
\begin{equation}\label{eq-LK}
 L=G_A^{-1}HG_B^{-1}H^T,
 \qquad
 K=G_AC^{-T}G_BC^{-1}.
\end{equation}
Then, with similarity over $\C$,
\[
 P^{-1}RQ^{-1}R^T\sim L,
 \qquad
 PQ\sim K.
\]
Since the matrices on the left are real, their characteristic polynomials coincide with those of $L$ and $K$.
\end{corollary}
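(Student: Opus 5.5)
The plan is to express everything in the eigenbases of \eqref{eq-diag} and use the factorizations of Proposition~\ref{prop-factorizations} and Lemma~\ref{lem-changeofbasis}. The diagonal factors $D_x$ and $D_y$ should then cancel up to a similarity.

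For the first matrix, invert the definitions \eqref{eq-hats}: $P=S\widehat PS^T$, $Q=T^{-T}\widehat QT^{-1}$ and $R=S\widehat RT^{-1}$. Substituting gives
\[
P^{-1}RQ^{-1}R^T = S^{-T}\widehat P^{-1}S^{-1}\,S\widehat RT^{-1}\,T\widehat Q^{-1}T^T\,T^{-T}\widehat R^TS^T
= S^{-T}\bigl(\widehat P^{-1}\widehat R\widehat Q^{-1}\widehat R^T\bigr)S^T .
\]
Now insert \eqref{eq-Pfactor}--\eqref{eq-Rfactor}. The middle product becomes
\[
D_x^{-1}G_A^{-1}D_x^{-1}\cdot D_xHD_y\cdot D_y^{-1}G_B^{-1}D_y^{-1}\cdot D_yH^TD_x = D_x^{-1}LD_x .
\]
Hence $P^{-1}RQ^{-1}R^T=(S^{-T}D_x^{-1})L(S^{-T}D_x^{-1})^{-1}$, which is similar to $L$. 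Here (S2) is used, via Proposition~\ref{prop-factorizations}, to make $D_x$ and $D_y$ invertible.

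For the second matrix the two bases must be related through $Z=S^{-1}T$, so that $T=SZ$. Then
\[
PQ = S\widehat PS^TT^{-T}\widehat QT^{-1} = S\,\widehat P Z^{-T}\widehat Q Z^{-1}\,S^{-1},
\]
using $S^TT^{-T}=(T^{-1}S)^T=Z^{-T}$ and $T^{-1}=Z^{-1}S^{-1}$. By Lemma~\ref{lem-changeofbasis}, $Z=D_xCD_y$, so $Z^{-T}=D_x^{-1}C^{-T}D_y^{-1}$ and $Z^{-1}=D_y^{-1}C^{-1}D_x^{-1}$. This gives
\[
\widehat PZ^{-T}\widehat QZ^{-1} = D_xG_AD_x\cdot D_x^{-1}C^{-T}D_y^{-1}\cdot D_yG_BD_y\cdot D_y^{-1}C^{-1}D_x^{-1} = D_xKD_x^{-1}.
\]
Therefore $PQ=(SD_x)K(SD_x)^{-1}$, which is similar to $K$.

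The final sentence of the statement is standard. Similarity over $\C$ preserves the characteristic polynomial, and the left-hand matrices are real, so their characteristic polynomials (real polynomials) equal those of $L$ and $K$.

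The only delicate point is the bookkeeping of transposes in the $PQ$ computation. One must use plain transposes, as the paper stipulates, and check that $D_x$, $D_y$ are symmetric, so that $D_x^T=D_x$ and no conjugation appears. I expect no genuine obstacle beyond verifying that every inverse used exists. That follows from Proposition~\ref{prop-factorizations}, the Cauchy determinant \eqref{eq-general-cauchy-det}, and Lemma~\ref{lem-changeofbasis}.
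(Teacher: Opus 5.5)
Your proposal is correct and follows the paper's proof essentially step for step. You invert the transforms in \eqref{eq-hats}, insert the factorizations of Proposition~\ref{prop-factorizations}, and for $PQ$ pass through $Z=S^{-1}T=D_xCD_y$, which yields $D_x^{-1}LD_x$ and $D_xKD_x^{-1}$ exactly as in the paper.
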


\begin{proof}
By \eqref{eq-hats},
\[
 \widehat P^{-1}\widehat R\widehat Q^{-1}\widehat R^T
 =S^T\bigl(P^{-1}RQ^{-1}R^T\bigr)S^{-T},
\]
and by Proposition \ref{prop-factorizations},
\[
 \widehat P^{-1}\widehat R\widehat Q^{-1}\widehat R^T
 =D_x^{-1}G_A^{-1}HG_B^{-1}H^TD_x
 =D_x^{-1}LD_x .
\]
For the second similarity, $P=S\widehat PS^T$ and $Q=T^{-T}\widehat QT^{-1}$ give
\[
 PQ=S\widehat PS^TT^{-T}\widehat QT^{-1}
 =S\bigl(\widehat PZ^{-T}\widehat QZ^{-1}\bigr)S^{-1},
\]
because $Z^{-T}=S^TT^{-T}$ and $Z^{-1}S^{-1}=T^{-1}$. By Proposition \ref{prop-factorizations} and Lemma \ref{lem-changeofbasis},
\[
 \widehat PZ^{-T}\widehat QZ^{-1}
 =D_xG_AD_x\cdot D_x^{-1}C^{-T}D_y^{-1}\cdot D_yG_BD_y\cdot D_y^{-1}C^{-1}D_x^{-1}
 =D_xKD_x^{-1}.\qedhere
\]
\end{proof}

\section{A rational interpolation identity}

Let $\beta_1,\ldots,\beta_n\in\C$ be distinct with $\beta_i\beta_j\neq1$ for all $i,j$. Define
\begin{equation}\label{eq-phi}
 \phi_\beta(z)=\prod_{r=1}^n\frac{z-\beta_r}{1-\beta_rz},
\end{equation}
the vector-valued rational function
\begin{equation}\label{eq-kell}
 k_\beta(z)=
 \begin{pmatrix}
 (1-\beta_1z)^{-1}\\ \vdots\\ (1-\beta_nz)^{-1}
 \end{pmatrix},
\end{equation}
and the Cauchy matrix $G_\beta=[(1-\beta_i\beta_j)^{-1}]_{i,j=1}^n$, which is nonsingular by \eqref{eq-general-cauchy-det}.

\begin{lemma}[Interpolation identity]
\label{lem-cauchy-identity}
As an identity of rational functions in $z$ and $\zeta$,
\begin{equation}\label{eq-kernel1}
 k_\beta(z)^TG_\beta^{-1}k_\beta(\zeta)
 =\frac{1-\phi_\beta(z)\phi_\beta(\zeta)}{1-z\zeta}.
\end{equation}
\end{lemma}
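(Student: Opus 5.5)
We will prove \eqref{eq-kernel1} by interpolation: show that both sides have the same structure in $z$ for fixed $\zeta$, and that they agree at enough points to force equality.

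\textbf{Step 1: the right-hand side, for fixed generic $\zeta$.} Write $\phi_\beta=p/q$ with $p(z)=\prod_r(z-\beta_r)$ and $q(z)=\prod_r(1-\beta_rz)$. Then the right-hand side equals
\[
 \frac{q(z)q(\zeta)-p(z)p(\zeta)}{(1-z\zeta)\,q(z)q(\zeta)}.
\]
At $z=1/\zeta$ we have $p(1/\zeta)=\zeta^{-n}\prod_r(1-\beta_r\zeta)$ and $q(1/\zeta)=\zeta^{-n}\prod_r(\zeta-\beta_r)$. Hence $p(1/\zeta)p(\zeta)=q(1/\zeta)q(\zeta)$, so the numerator vanishes at $z=1/\zeta$. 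The apparent pole there is therefore removable. The right-hand side can thus be written as $N(z)/q(z)$, where $N$ is a polynomial in $z$ of degree at most $n-1$ with coefficients rational in $\zeta$. By partial fractions, since the $\beta_r$ are distinct and nonzero wherever they appear as poles, it lies in the span of the functions $(1-\beta_rz)^{-1}$, $r=1,\dots,n$. The case $\beta_r=0$ contributes a constant, which is still of the form $(1-\beta_rz)^{-1}$, so that case is covered as well.

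\textbf{Step 2: the left-hand side, for fixed $\zeta$.} The left-hand side is $\sum_r c_r(\zeta)(1-\beta_rz)^{-1}$ with $c(\zeta)=G_\beta^{-1}k_\beta(\zeta)$. It therefore lies in the same $n$-dimensional span.

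\textbf{Step 3: interpolation at $z=\beta_i$.} The functions $(1-\beta_rz)^{-1}$ are linearly independent. Evaluating at the $n$ points $z=\beta_1,\ldots,\beta_n$ gives exactly the matrix $G_\beta$, which is nonsingular, so an element of the span is determined by its values at these points. We check agreement there.

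On the left, $k_\beta(\beta_i)^T$ is the $i$th row of $G_\beta$, since $G_\beta$ is symmetric. Thus
\[
 k_\beta(\beta_i)^TG_\beta^{-1}k_\beta(\zeta)=e_i^Tk_\beta(\zeta)=(1-\beta_i\zeta)^{-1}.
\]
On the right, $\phi_\beta(\beta_i)=0$, since $1-\beta_i^2\neq 0$ by hypothesis so the zero in the numerator is not cancelled. This gives $(1-\beta_i\zeta)^{-1}$ as well. The two sides agree for all generic $\zeta$, so they agree as rational functions in both variables.

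\textbf{Main obstacle.} The delicate point is Step 1: verifying that the pole at $z=1/\zeta$ cancels, and that the degree count places the right-hand side in the span of the $(1-\beta_rz)^{-1}$. In particular we must check the behavior at $z=\infty$, and handle the possibility that some $\beta_r=0$, in which case $q$ has degree less than $n$.

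The clean way to do this is to consider
\[
 F(z)=q(z)\cdot\text{RHS}.
\]
We show $F$ is a polynomial of degree at most $n-1$. The numerator $q(z)q(\zeta)-p(z)p(\zeta)$ has degree at most $n$ in $z$ and vanishes at $z=1/\zeta$, so dividing by $1-z\zeta$ leaves degree at most $n-1$. A rational function $F(z)/q(z)$ with $\deg F\le n-1$ lies in the span of $\{(1-\beta_rz)^{-1}\}$. The reason is that this span is $\{f/q:\deg f\le n-1\}$: both spaces have dimension $n$, and each $(1-\beta_rz)^{-1}$ equals $\prod_{s\neq r}(1-\beta_sz)/q(z)$, whose numerator has degree at most $n-1$.
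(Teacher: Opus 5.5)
Your proof is correct and follows essentially the same route as the paper. Both write $\phi_\beta=p/q$ and use $p(1/\zeta)=\zeta^{-n}q(\zeta)$, $q(1/\zeta)=\zeta^{-n}p(\zeta)$ to cancel the apparent pole at $z=1/\zeta$, so that $q(z)$ times either side is a polynomial of degree at most $n-1$, and then conclude by interpolation at the $n$ nodes $\beta_j$, using that $k_\beta(\beta_j)^T$ is the $j$th row of $G_\beta$; your span/dimension phrasing is equivalent to the paper's ``polynomial of degree at most $n-1$ with $n$ zeros'' step. One small slip: $\phi_\beta(\beta_i)=0$ needs $q(\beta_i)=\prod_r(1-\beta_r\beta_i)\neq0$, which is the full hypothesis $\beta_r\beta_i\neq1$ for all $r$, not only $1-\beta_i^2\neq0$.
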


\begin{proof}
Write $p(z)=\prod_r(z-\beta_r)$ and $q(z)=\prod_r(1-\beta_rz)$, so that $\phi_\beta=p/q$. For $\zeta\neq0$,
\begin{equation}\label{eq-reciprocalphi}
 \phi_\beta(1/\zeta)=\phi_\beta(\zeta)^{-1},
\end{equation}
since $p(1/\zeta)=q(\zeta)/\zeta^n$ and $q(1/\zeta)=p(\zeta)/\zeta^n$. Fix $\zeta$ outside the finite set where the expressions below are undefined; in particular $\zeta\neq0$ and $p(\zeta)q(\zeta)\neq0$. Let
\[
 f_\zeta(z)=\frac{1-\phi_\beta(z)\phi_\beta(\zeta)}{1-z\zeta}
 =\frac{q(z)-\phi_\beta(\zeta)p(z)}{q(z)(1-z\zeta)}.
\]
By \eqref{eq-reciprocalphi} the numerator $q(z)-\phi_\beta(\zeta)p(z)$ vanishes at $z=1/\zeta$, hence is divisible by $1-z\zeta$, and
\[
 r_\zeta(z)=\frac{q(z)-\phi_\beta(\zeta)p(z)}{1-z\zeta}
\]
is a polynomial of degree at most $n-1$; thus $q(z)f_\zeta(z)=r_\zeta(z)$. Likewise, with $g_\zeta(z)=k_\beta(z)^TG_\beta^{-1}k_\beta(\zeta)$, the product $q(z)g_\zeta(z)$ is a polynomial of degree at most $n-1$, because $q(z)(1-\beta_iz)^{-1}=\prod_{r\neq i}(1-\beta_rz)$. At each of the $n$ distinct points $z=\beta_j$, the hypothesis gives $q(\beta_j)\neq0$; moreover $q(\zeta)\neq0$ ensures $1-\beta_j\zeta\neq0$, so that, using $\phi_\beta(\beta_j)=0$,
\[
 f_\zeta(\beta_j)=\frac{1}{1-\beta_j\zeta}.
\]
On the other hand, $k_\beta(\beta_j)^T$ is the $j$th row of $G_\beta$, so
\[
 g_\zeta(\beta_j)=e_j^Tk_\beta(\zeta)=\frac{1}{1-\beta_j\zeta}
\]
as well. The polynomial $q(z)\bigl(f_\zeta(z)-g_\zeta(z)\bigr)$ has degree at most $n-1$ and vanishes at the $n$ distinct points $\beta_1,\ldots,\beta_n$; it is therefore identically zero. Hence \eqref{eq-kernel1} holds, for each such $\zeta$, as an identity of rational functions of $z$, and then, both sides being rational in $(z,\zeta)$, as an identity of rational functions of $(z,\zeta)$.
\end{proof}

\begin{remark}
\label{rem-modelspace}
Lemma \ref{lem-cauchy-identity} is classical in substance. When the nodes are real and lie in $(-1,1)$, $\phi_\beta$ is a finite Blaschke product. Upon setting $\zeta=\overline{w}$, the right-hand side of \eqref{eq-kernel1} becomes the reproducing kernel of the model space $K_{\phi_\beta}=H^2\ominus\phi_\beta H^2$; equivalently, the displayed bilinear expression is its polarization. The matrix $G_\beta$ is the Gram matrix of the Szeg\H{o} kernels $z\mapsto(1-\beta_iz)^{-1}$, which span $K_{\phi_\beta}$, and \eqref{eq-kernel1} is the corresponding change-of-basis formula; see, e.g., \cite{Nikolskii1986,FricainMashreghi2016}. For general complex nodes and the plain transpose, \eqref{eq-kernel1} is a bilinear algebraic continuation of this identity, proved above directly by rational interpolation. The reciprocal-node instance \eqref{eq-idC} below plays the role of the corresponding identity for $\phi_\beta^{-1}$. Related Gram-matrix descriptions of Stein and Lyapunov solutions appear in \cite{Ptak1983,HeinigJungnickel1986}, and \cite{Ferrante2026} is a recent use of the Cauchy structure of Lyapunov solutions.
\end{remark}

Return now to the standing assumptions (S1)--(S2), write $\phi_B$ for the product \eqref{eq-phi} with node family $(b_1,\ldots,b_n)$, and define
\begin{equation}\label{eq-Phi}
 \Phi=\operatorname{diag}\bigl(\phi_B(a_1),\ldots,\phi_B(a_n)\bigr).
\end{equation}
By Lemma~\ref{lem-changeofbasis} and nonresonance, every diagonal entry of $\Phi$ is finite and nonzero, so $\Phi$ is nonsingular.

\begin{corollary}
\label{cor-two-identities}
Under (S1) and (S2), define
\begin{equation}\label{eq-MN}
 M=HG_B^{-1}H^T,
 \qquad
 N=CG_B^{-1}C^T.
\end{equation}
Then
\begin{align}
 M&=G_A-\Phi G_A\Phi, \label{eq-idH}\\
 N&=\Phi^{-1}G_A\Phi^{-1}-G_A
   =\Phi^{-1}M\Phi^{-1}. \label{eq-idC}
\end{align}
\end{corollary}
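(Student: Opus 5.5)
Both identities follow entrywise from Lemma~\ref{lem-cauchy-identity} with node family $\beta=b=(b_1,\ldots,b_n)$; these nodes are distinct and satisfy $b_ib_j\neq1$ by (S1) and nonresonance, so $G_\beta=G_B$. The plan is to recognize each entry of $M$ and of $N$ as the bilinear form $k_b(z)^TG_B^{-1}k_b(\zeta)$ evaluated at suitable points, and then read off the right-hand side of \eqref{eq-kernel1}.

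For \eqref{eq-idH}, the $i$th row of $H$ is $k_b(a_i)^T$, since $H_{ij}=(1-a_ib_j)^{-1}$. Hence
\[
M_{ij}=k_b(a_i)^TG_B^{-1}k_b(a_j)=\frac{1-\phi_B(a_i)\phi_B(a_j)}{1-a_ia_j}.
\]
The identity of rational functions may be evaluated at $(z,\zeta)=(a_i,a_j)$, because every quantity involved is finite there. The factors $k_b(a_i)$ are finite since $1-b_ra_i\neq0$, and $\phi_B(a_i)$ is finite since $1-b_ra_i\neq0$. The denominator $1-a_ia_j$ is nonzero. To be careful, I would argue via the polynomial identity $q(z)q(\zeta)(1-z\zeta)\,[\text{LHS}-\text{RHS}]=0$, or simply by continuity, since both sides are regular at these points. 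The right-hand side is exactly $(G_A-\Phi G_A\Phi)_{ij}$.

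For \eqref{eq-idC}, I would write $C_{ij}=1/(b_j-a_i)$ as $-a_i^{-1}\cdot 1/(1-b_j/a_i)$, which requires $a_i\neq0$ from (S1). Then the $i$th row of $C$ is $-a_i^{-1}k_b(1/a_i)^T$, and
\[
N_{ij}=\frac{1}{a_ia_j}\,k_b(1/a_i)^TG_B^{-1}k_b(1/a_j)
=\frac{1}{a_ia_j}\cdot\frac{1-\phi_B(1/a_i)\phi_B(1/a_j)}{1-1/(a_ia_j)}.
\]
By \eqref{eq-reciprocalphi}, $\phi_B(1/a_i)=\phi_B(a_i)^{-1}$. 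This step is legitimate because $\phi_B(a_i)\neq0$ by Lemma~\ref{lem-changeofbasis}, since the spectra are disjoint. Simplifying gives
\[
N_{ij}=\frac{1-\phi_i^{-1}\phi_j^{-1}}{a_ia_j-1}=\frac{\phi_i^{-1}\phi_j^{-1}-1}{1-a_ia_j},
\]
where $\phi_i=\phi_B(a_i)$. This is $(\Phi^{-1}G_A\Phi^{-1}-G_A)_{ij}$. The final equality $\Phi^{-1}M\Phi^{-1}=\Phi^{-1}G_A\Phi^{-1}-G_A$ is then immediate from \eqref{eq-idH}.

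The main obstacle is justifying evaluation at these specific points. At $(1/a_i,1/a_j)$ one needs $1-b_r/a_i\neq0$, which holds by disjointness of the spectra. One also needs $1-1/(a_ia_j)\neq0$, which holds by nonresonance. I would handle both evaluations uniformly by clearing denominators, turning \eqref{eq-kernel1} into a polynomial identity in $(z,\zeta)$. Such an identity holds everywhere, and it can then be divided at points where the relevant factors are nonzero.
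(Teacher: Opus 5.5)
Your proof is correct. For \eqref{eq-idH} it is the paper's argument essentially verbatim.

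For \eqref{eq-idC} you use the mirror image of the paper's inversion. You keep the nodes $b$ and evaluate \eqref{eq-kernel1} at the reciprocal points $(1/a_i,1/a_j)$. You read the $i$th row of $C$ as $-a_i^{-1}k_b(1/a_i)^T$ and then apply $\phi_B(1/a_i)=\phi_B(a_i)^{-1}$ from \eqref{eq-reciprocalphi}. The paper instead inverts the nodes. It applies Lemma~\ref{lem-cauchy-identity} to the family $b^{-1}$ at $(a_i,a_j)$, identifies the vectors $k_{b^{-1}}(a_i)^T$ as the rows of $CD_b$, and computes $G_{b^{-1}}=-D_bG_BD_b$ and $\phi_{b^{-1}}=\phi_B^{-1}$.

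Your version avoids computing $G_{b^{-1}}$ and reuses the reflection identity already proved inside the lemma, but it needs $a_i\neq0$. The paper's version needs $b_r\neq0$ and not $a_i\neq0$. Since (S1) supplies both, nothing is lost either way.

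Your explicit clearing of denominators is also a plus. You multiply \eqref{eq-kernel1} by $q(z)q(\zeta)(1-z\zeta)$, then check that $q(1/a_i)\neq0$ by disjointness of the spectra (Lemma~\ref{lem-changeofbasis}) and that $a_ia_j\neq1$ by nonresonance. This makes precise the admissibility of the evaluations, which the paper only asserts.
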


\begin{proof}
The $i$th row of $H$ is $k_b(a_i)^T$, where $k_b$ denotes \eqref{eq-kell} for the node family $b$. Lemma \ref{lem-cauchy-identity} with $\beta=b$, evaluated at $(z,\zeta)=(a_i,a_j)$, which is admissible because $1-b_ra_i\neq0$ and $1-a_ia_j\neq0$, gives
\[
 \bigl(HG_B^{-1}H^T\bigr)_{ij}
 =\frac{1-\phi_B(a_i)\phi_B(a_j)}{1-a_ia_j},
\]
which is the $(i,j)$ entry of $G_A-\Phi G_A\Phi$; this proves \eqref{eq-idH}.

For \eqref{eq-idC}, apply the lemma to the reciprocal family $\beta=(b_1^{-1},\ldots,b_n^{-1})$, which is admissible because its entries are distinct and nonzero by (S1), and $b_i^{-1}b_j^{-1}\neq1$ is equivalent to $b_ib_j\neq1$. Write $D_b=\operatorname{diag}(b_1,\ldots,b_n)$. For this family,
\[
 \bigl(k_{b^{-1}}(z)\bigr)_r=\frac{1}{1-z/b_r}=\frac{b_r}{b_r-z},
 \qquad
 G_{b^{-1}}=\Bigl[\frac{b_ib_j}{b_ib_j-1}\Bigr]_{i,j=1}^n=-D_bG_BD_b,
 \qquad
 \phi_{b^{-1}}=\phi_B^{-1},
\]
the last identity because $(z-b_r^{-1})/(1-zb_r^{-1})=(1-zb_r)/(z-b_r)$ for each $r$. We evaluate at $(z,\zeta)=(a_i,a_j)$, which is admissible because $a_i\neq b_r$ by Lemma~\ref{lem-changeofbasis}. The row vectors $k_{b^{-1}}(a_i)^T$ assemble into the matrix $CD_b$, and \eqref{eq-kernel1} for the reciprocal family reads
\[
 (CD_b)\bigl(-D_bG_BD_b\bigr)^{-1}(CD_b)^T=G_A-\Phi^{-1}G_A\Phi^{-1},
\]
that is,
\[
 N=CG_B^{-1}C^T=\Phi^{-1}G_A\Phi^{-1}-G_A.
\]
Together with \eqref{eq-idH}, this gives
\[
 \Phi^{-1}M\Phi^{-1}
 =\Phi^{-1}G_A\Phi^{-1}-G_A
 =N,
\]
which completes \eqref{eq-idC}.
\end{proof}

\section{Proof on the simple-spectrum set}

Under (S1)--(S2), the matrices $M$ and $N$ from Corollary~\ref{cor-two-identities} are nonsingular, since $H$, $G_B$, and $C$ are. Define
\begin{equation}\label{eq-Etilde}
 \widetilde E=G_A^{-1}\Phi G_A\Phi .
\end{equation}

\begin{proposition}
\label{prop-generic-similarity}
Under (S1)--(S2),
\[
 L=I-\widetilde E
 \qquad\text{and}\qquad
 I+K=\Phi^{-1}\bigl(G_AM^{-1}\bigr)\Phi .
\]
In particular $I+K$ and $I+PQ$ are nonsingular, and
\[
 P^{-1}RQ^{-1}R^T\sim I-\widetilde E,
 \qquad
 (I+PQ)^{-1}\sim I-\widetilde E .
\]
Hence the two matrices in Theorem \ref{thm-main} are similar over $\C$, and therefore over $\R$, since both are real; in particular they have the same characteristic polynomial.
\end{proposition}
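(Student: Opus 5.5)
The plan is to compute directly with the two identities of Corollary~\ref{cor-two-identities}, reducing both $L$ and $I+K$ to expressions in $G_A$, $\Phi$ and $M$ alone. Corollary~\ref{cor-reduced-targets} then transfers the result back to $P^{-1}RQ^{-1}R^T$ and $I+PQ$.

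\emph{Step 1: the identity for $L$.} Since $M=HG_B^{-1}H^T$, we have $L=G_A^{-1}M$. Substituting \eqref{eq-idH} gives
\[
L=G_A^{-1}\bigl(G_A-\Phi G_A\Phi\bigr)=I-G_A^{-1}\Phi G_A\Phi=I-\widetilde E .
\]

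\emph{Step 2: rewriting $K$.} Since $C$ and $G_B$ are nonsingular, $N^{-1}=(CG_B^{-1}C^T)^{-1}=C^{-T}G_BC^{-1}$, so $K=G_AN^{-1}$. By \eqref{eq-idC}, $N=\Phi^{-1}M\Phi^{-1}$, hence $N^{-1}=\Phi M^{-1}\Phi$. This uses the nonsingularity of $M$ noted before the proposition. Therefore $K=G_A\Phi M^{-1}\Phi$.

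\emph{Step 3: the identity for $I+K$.} Conjugating by $\Phi$ gives
\[
\Phi(I+K)\Phi^{-1}=I+\Phi G_A\Phi M^{-1}=\bigl(M+\Phi G_A\Phi\bigr)M^{-1}=G_AM^{-1},
\]
where the last equality is \eqref{eq-idH} rearranged. This is the claimed formula $I+K=\Phi^{-1}(G_AM^{-1})\Phi$. Since $G_A$ and $M$ are nonsingular, so is $I+K$. By Corollary~\ref{cor-reduced-targets}, $PQ\sim K$, so $I+PQ\sim I+K$ and $I+PQ$ is nonsingular.

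\emph{Step 4: identifying the inverse.} Inverting the formula of Step 3 gives
\[
(I+K)^{-1}=\Phi^{-1}MG_A^{-1}\Phi\sim MG_A^{-1}=G_A\bigl(G_A^{-1}M\bigr)G_A^{-1}\sim G_A^{-1}M=L=I-\widetilde E .
\]
Hence $(I+PQ)^{-1}\sim(I+K)^{-1}\sim I-\widetilde E$. Also $P^{-1}RQ^{-1}R^T\sim L=I-\widetilde E$ by Corollary~\ref{cor-reduced-targets}. By transitivity, the two target matrices are similar over $\C$.

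To pass from $\C$ to $\R$, I would invoke the standard fact that two real matrices similar over $\C$ are similar over $\R$. One argument: write the complex intertwiner as $X=X_1+iX_2$ with $X_1,X_2$ real; then $X_1+tX_2$ intertwines the two matrices for every real $t$, and its determinant is a polynomial in $t$ that does not vanish at $t=i$, so it is nonzero for some real $t$. Equivalently, one can appeal to the rational canonical form.

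There is no real obstacle. The only point needing care is Step 3: one must recognize that $K=G_AN^{-1}$ and then use the relation $M+\Phi G_A\Phi=G_A$, so that the conjugated matrix $\Phi(I+K)\Phi^{-1}$ collapses to $G_AM^{-1}$.
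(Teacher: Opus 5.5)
Your proof is correct and follows the paper's argument essentially step for step: $L=G_A^{-1}M=I-\widetilde E$, then $K=G_AN^{-1}$ with $N=\Phi^{-1}M\Phi^{-1}$, then the collapse of $I+K$ to $\Phi^{-1}(G_AM^{-1})\Phi$, and finally $G_AM^{-1}\sim L^{-1}$. For the collapse you conjugate by $\Phi$ and use $M+\Phi G_A\Phi=G_A$, whereas the paper uses the equivalent relation $N+G_A=\Phi^{-1}G_A\Phi^{-1}$; your argument via $\det(X_1+tX_2)$ that real matrices similar over $\C$ are similar over $\R$ correctly fills in a step the paper only asserts.
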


\begin{proof}
By Corollary~\ref{cor-two-identities},
\[
 L=G_A^{-1}M=I-\widetilde E,
 \qquad
 N=\Phi^{-1}M\Phi^{-1}.
\]
Also, by \eqref{eq-LK}, $K=G_AN^{-1}$. Hence
\[
 I+K=(N+G_A)N^{-1}
 =\Phi^{-1}G_A\Phi^{-1}\cdot\Phi M^{-1}\Phi
 =\Phi^{-1}\bigl(G_AM^{-1}\bigr)\Phi,
\]
where $N+G_A=\Phi^{-1}G_A\Phi^{-1}$ follows again from Corollary~\ref{cor-two-identities}. Since $G_A$ and $M$ are nonsingular, so is $I+K$; by Corollary~\ref{cor-reduced-targets}, $I+PQ$ is nonsingular as well. Finally,
\[
 G_AM^{-1}=G_A\bigl(M^{-1}G_A\bigr)G_A^{-1}=G_AL^{-1}G_A^{-1}\sim L^{-1},
\]
so $(I+PQ)^{-1}\sim(I+K)^{-1}\sim L=I-\widetilde E$. Together with $P^{-1}RQ^{-1}R^T\sim L$ from Corollary~\ref{cor-reduced-targets}, this proves the proposition.
\end{proof}

\begin{remark}
\label{rem-generic-stronger}
On the simple-spectrum set, the argument proves more than equality of spectra. The two matrices in the conjecture are in fact similar. The common representative is $I-\widetilde E$, with $\widetilde E$ given by \eqref{eq-Etilde}. The original conjecture only asks for equality of characteristic polynomials, and Section~\ref{sec-removal} uses precisely that weaker statement to remove the generic assumptions. Similarity is not preserved under arbitrary limits, so this continuity argument alone does not determine whether the two matrices remain similar under the bare nonresonance hypothesis. A stronger conclusion obtained after comparing with the independent Gillberg--L\"ofberg preprint is discussed in Section~\ref{sec-discussion}.
\end{remark}

The representative also exposes a structural feature of the spectrum itself.

\begin{corollary}
\label{cor-spectral-dependence}
Under (S1)--(S2), the common characteristic polynomial of $P^{-1}RQ^{-1}R^T$ and $(I+PQ)^{-1}$ is that of
\[
 I-\widetilde E=I-G_A^{-1}\Phi G_A\Phi,
\]
and therefore depends only on the spectra $\{a_i\}$ of $A$ and $\{b_j\}$ of $B$, and not otherwise on $v$ and $w$.
\end{corollary}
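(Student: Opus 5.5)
The statement is essentially a bookkeeping consequence of Proposition~\ref{prop-generic-similarity} together with the explicit form of $\Phi$ and $G_A$. We start from Proposition~\ref{prop-generic-similarity}, which under (S1)--(S2) gives
\[
 P^{-1}RQ^{-1}R^T\sim I-\widetilde E
 \qquad\text{and}\qquad
 (I+PQ)^{-1}\sim I-\widetilde E .
\]
Similar matrices share characteristic polynomials. Hence both target matrices have the characteristic polynomial of $I-\widetilde E=I-G_A^{-1}\Phi G_A\Phi$, which proves the first assertion directly.

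For the dependence claim we inspect the ingredients of $\widetilde E$. By \eqref{eq-CauchyMatrices}, $G_A$ has entries $(1-a_ia_j)^{-1}$, so it is determined by the ordered list $(a_1,\ldots,a_n)$. By \eqref{eq-Phi} and \eqref{eq-phi}, $\Phi$ is the diagonal matrix with entries
\[
 \phi_B(a_i)=\prod_{r}\frac{a_i-b_r}{1-b_ra_i}.
\]
This expression depends on the $a_i$ and is symmetric in the $b_r$, so it depends only on the set $\{b_j\}$. Hence $\widetilde E$ is a function of $(a_1,\ldots,a_n)$ and $\{b_j\}$ alone. Neither $v$ and $w$ nor the vectors $x$, $y$ or the choices of $S$ and $T$ enter; the diagonal scalings $D_x$ and $D_y$ were absorbed earlier by similarity in Corollary~\ref{cor-reduced-targets}.

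One point needs care: the ordering of the eigenvalues $a_i$ is arbitrary. Reordering them by a permutation $\pi$ replaces $G_A$ by $\Pi G_A\Pi^T$ and $\Phi$ by $\Pi\Phi\Pi^T$, where $\Pi$ is the corresponding permutation matrix. This changes $\widetilde E$ to $\Pi\widetilde E\Pi^T$, a similar matrix. The characteristic polynomial is therefore a well-defined function of the unordered spectra $\{a_i\}$ and $\{b_j\}$. I expect no real obstacle here; this ordering check is the only step that needs more than reading off definitions.
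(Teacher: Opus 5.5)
Your proposal is correct and takes essentially the same approach as the paper, which gives no separate proof and treats the corollary as read off directly from Proposition~\ref{prop-generic-similarity} together with the definitions of $G_A$ and $\Phi$. Your extra check, that reordering the $a_i$ replaces $\widetilde E$ by $\Pi\widetilde E\Pi^T$ and so leaves the characteristic polynomial unchanged, is valid; the paper leaves that detail implicit.
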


\begin{remark}[Pole--zero duality]\label{rem-duality}
The preceding spectral dependence is symmetric in the two node families. Let $\phi_A$ denote the product \eqref{eq-phi} with nodes $(a_1,\ldots,a_n)$, and set
\[
 \Psi=\operatorname{diag}\bigl(\phi_A(b_1),\ldots,\phi_A(b_n)\bigr),
 \qquad
 E=G_B^{-1}\Psi G_B\Psi .
\]
By Lemma~\ref{lem-changeofbasis} and nonresonance, $\Psi$ is nonsingular. Applying Lemma~\ref{lem-cauchy-identity} with the node family $a=(a_1,\ldots,a_n)$ at $(z,\zeta)=(b_i,b_j)$ gives
\[
 H^TG_A^{-1}H=G_B-\Psi G_B\Psi.
\]
Hence
\[
 I-E=(G_B^{-1}H^T)(G_A^{-1}H),
 \qquad
 I-\widetilde E=(G_A^{-1}H)(G_B^{-1}H^T).
\]
The two matrices are products of the same nonsingular factors in opposite order and are therefore similar. Thus the common characteristic polynomial is unchanged when the spectral families $\{a_i\}$ and $\{b_j\}$ are exchanged. In the stable SISO interpretation, where these families are the poles and zeros of the model, this is the algebraic form of pole--zero duality.
\end{remark}

\section{Removal of the generic assumptions}\label{sec-removal}

It remains to remove the simple, nonzero spectral assumptions used to expose the Cauchy structure. This requires only density and continuity.

Let $\mathcal A$ denote the set of real parameter triples $(A,v,w)$ satisfying the nonresonance hypothesis and for which $P$ and $Q$ are nonsingular. On the nonresonant set, the vectorized Stein equations are uniquely solvable linear systems whose coefficients depend polynomially on the data, so $P$, $Q$, and $R$ depend continuously on $(A,v,w)$. Thus $\mathcal A$ is open.

\begin{lemma}
\label{lem-density}
The triples in $\mathcal A$ for which $A$ and $B=A+vw^T$ have simple, nonzero spectra are dense in $\mathcal A$.
\end{lemma}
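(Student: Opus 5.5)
Since $\mathcal A$ is open, it suffices to show that the complement of the good set is contained in a proper real-algebraic subset of the parameter space $\R^{n\times n}\times\R^n\times\R^n\cong\R^{n^2+2n}$. The good set is the set where $A$ and $B$ have simple, nonzero spectra. A proper algebraic subset has empty interior, so its complement is open and dense in the whole space. Intersecting with the open set $\mathcal A$ then gives density in $\mathcal A$: every nonempty open subset of $\mathcal A$ is a nonempty open subset of the ambient space and therefore meets the complement of the bad set.

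The bad set will be the zero set of the single polynomial
\[
 F(A,v,w)=\operatorname{disc}\bigl(\chi_A\bigr)\cdot\det A\cdot\operatorname{disc}\bigl(\chi_{A+vw^T}\bigr)\cdot\det\bigl(A+vw^T\bigr),
\]
where $\chi$ denotes the characteristic polynomial and $\operatorname{disc}$ its discriminant. Each factor is a polynomial in the entries of $(A,v,w)$. Simplicity of the spectrum is equivalent to nonvanishing of the discriminant, and nonzero spectrum is equivalent to $\det\neq0$. Hence the good set is exactly $\{F\neq0\}$.

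It remains to check that $F$ is not the zero polynomial. A nonzero real polynomial cannot vanish on a nonempty open set, so its zero set has empty interior, as required. To see that $F\not\equiv0$, exhibit a single point where $F\neq0$. Take $A=\operatorname{diag}(2,3,\ldots,n+1)$ and $v=w=0$. Then $B=A$ has simple, nonzero spectrum, and all four factors are nonzero. This point need not lie in $\mathcal A$: indeed $P=0$ there. That does not matter, because the argument only uses that $F$ is a nonzero polynomial on the whole ambient space.

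There is no real obstacle. The one point needing care is that density is argued in the ambient space rather than inside $\mathcal A$ directly, together with openness of $\mathcal A$, which was established just before the lemma. Alternatively, one could perturb a given point of $\mathcal A$ explicitly, first $A$ to make it simple and nonzero-spectrum and then adjust $v,w$. However, that route requires controlling the spectrum of $B$ simultaneously, and the polynomial argument handles both conditions at once.
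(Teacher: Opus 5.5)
Your proposal is correct and is essentially the paper's own argument. You use the same polynomial $\det(A)\det(B)\operatorname{disc}(p_A)\operatorname{disc}(p_B)$, show it is nonzero at a diagonal matrix with distinct nonzero entries and $v=0$ (the paper takes $\operatorname{diag}(1,\ldots,n)$, yours $\operatorname{diag}(2,\ldots,n+1)$; the witness need not lie in $\mathcal A$, as you note), and intersect the resulting dense open set with the open set $\mathcal A$.
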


\begin{proof}
Let $p_A$ and $p_B$ denote the characteristic polynomials of $A$ and $B$. The failure of simple, nonzero spectra is detected by the polynomial
\[
 d(A,v,w)=\det(A)\det(B)\,\operatorname{disc}(p_A)\operatorname{disc}(p_B).
\]
This polynomial is not identically zero. For example, if $A=\operatorname{diag}(1,\ldots,n)$ and $v=0$, then $B=A$ and $d(A,v,w)\neq0$ for every $w$. Hence $\{d\neq0\}$ is dense in the full parameter space. Since $\mathcal A$ is open, $\mathcal A\cap\{d\neq0\}$ is dense in $\mathcal A$.
\end{proof}

\begin{proof}[Proof of Theorem \ref{thm-main}]
Fix $(A,v,w)\in\mathcal A$. By Lemma~\ref{lem-density}, choose triples $(A_k,v_k,w_k)\in\mathcal A$ satisfying (S1)--(S2) and converging to $(A,v,w)$. Let $P_k,Q_k,R_k$ denote the corresponding Stein solutions. Then $P_k\to P$, $Q_k\to Q$, and $R_k\to R$.

For every $k$, Proposition~\ref{prop-generic-similarity} gives
\[
 P_k^{-1}R_kQ_k^{-1}R_k^T
 \sim
 (I+P_kQ_k)^{-1}.
\]
Taking determinants yields
\begin{equation}\label{eq-det-limit}
 \det(R_k)^2\det(I+P_kQ_k)=\det(P_k)\det(Q_k).
\end{equation}
Passing to the limit gives
\[
 \det(R)^2\det(I+PQ)=\det(P)\det(Q).
\]
The right-hand side is nonzero, so both $R$ and $I+PQ$ are nonsingular.

The two matrices in \eqref{eq-main-spectrum} are therefore defined at the limiting data. By continuity of the Stein solutions and of matrix inversion,
\[
 P_k^{-1}R_kQ_k^{-1}R_k^T\longrightarrow P^{-1}RQ^{-1}R^T,
 \qquad
 (I+P_kQ_k)^{-1}\longrightarrow(I+PQ)^{-1}.
\]
For every $k$ the two matrices have the same characteristic polynomial. Since the coefficients of the characteristic polynomial are polynomial functions of the matrix entries, equality passes to the limit. This proves the spectral assertion. The final similarity assertion in Theorem~\ref{thm-main} is Proposition~\ref{prop-generic-similarity}.
\end{proof}

\begin{remark}[Complex data]\label{rem-complex}
The same proof applies to complex $A,v,w$ with the ordinary transpose throughout. The generic argument is already complex, and the corresponding generic spectral set is dense over $\C$.
\end{remark}

\section{Discussion}\label{sec-discussion}

The proof identifies a simple algebraic mechanism behind the De Cock--De Moor conjecture. The decisive assumption is the rank-one relation $B-A=vw^T$. On the generic set, this relation forces the change-of-basis matrix between eigenbases of $A$ and $B$ to be a diagonally scaled Cauchy matrix. The Stein equations then produce a second family of Cauchy matrices, and the spectral identity reduces to rational interpolation. Thus the equality of spectra is ultimately a consequence of the interaction between rank-one perturbations, Stein equations, and Cauchy structure.

This point of view also clarifies the role of the hypotheses in the original problem. Stability is natural in the system-theoretic setting, where it permits Gramian and infinite-series representations, but it plays no role in the finite-dimensional argument. The natural algebraic hypothesis is instead nonresonance of the associated Stein operators. The generic assumptions used in the proof are only a device for exposing the Cauchy structure; density and continuity remove them at the end. The same limiting argument shows that the nonsingularity of $I+PQ$ is automatic once $P$ and $Q$ are nonsingular.

The determinant relation used in that limiting argument,
\[
\det(R)^2\det(I+PQ)=\det(P)\det(Q),
\]
also connects the theorem to the original systems interpretation. In the stable setting of \cite{DeCock2004}, it is equivalent to the known product formula $\prod_i\cos^2\theta_i=\prod_i(1-\rho_i^2)$ relating principal angles and past--future canonical correlations, which underlies the cepstral norm identity of \cite{Martin2000,DeCock2002SCL}; see also \cite{JewellBloomfield1983}. Theorem~\ref{thm-main} strengthens this product relation to equality of the full characteristic polynomials.

On the generic set, the argument proves more than the conjecture. The two target matrices are similar to the common representative $I-G_A^{-1}\Phi G_A\Phi$. In particular, the common characteristic polynomial there depends only on the spectra of $A$ and $B$. In the stable SISO interpretation these spectra correspond to the poles and zeros of the underlying transfer function, connecting the matrix identity back to the principal angles and past--future canonical correlations that motivated the conjecture \cite{DeCock2004,DeCock2002PhD,DeCock2002SCL}.

After reading the independent preprint of Gillberg and L\"ofberg, we realized that the similarity conclusion can in fact be strengthened beyond the simple-spectrum set. They exhibit the explicit intertwiner $Z=\phi_B(A)^{-1}P$ on the domain where $A$ and $B$ have disjoint spectra, with $\phi_B(z)=\det(zI-B)/\det(I-zB)$ \cite{GillbergLofberg2026}. Under the hypotheses of Theorem~\ref{thm-main}, that spectral disjointness is automatic. Indeed, nonsingularity of $P$ implies that no left eigenvector $\ell^T$ of $A$ can satisfy $\ell^Tv=0$. Otherwise the first equation in \eqref{eq-stein} gives $\ell^TP(I-\alpha A^T)=0$, and nonresonance forces $\ell^TP=0$. Similarly, nonsingularity of $Q$ implies that no right eigenvector $r$ of $B$ satisfies $w^Tr=0$. If $A$ and $B$ shared an eigenvalue $\lambda$, choosing such left and right eigenvectors would give
\[
 0=\ell^T(B-A)r=(\ell^Tv)(w^Tr),
\]
a contradiction. Thus the Gillberg--L\"ofberg intertwiner is nonsingular throughout the domain of Theorem~\ref{thm-main}, and the two target matrices are actually similar there. We have kept the theorem and proof above in the form arising from the independently developed Cauchy argument, namely cospectrality in general and direct similarity on the simple-spectrum set, and record this stronger conclusion here to make the chronology and dependence of the arguments explicit.

The scalar rank-one factorization $(b_j-a_i)Z_{ij}=x_iy_j$ is essential to the Cauchy reduction. For higher-rank perturbations this scalar factorization is lost; moreover, the original problem records counterexamples to the unrestricted multicolumn analogue \cite{DeCock2004}. Any useful higher-rank extension would therefore require additional structure rather than a direct replacement of the rank-one factors by blocks.

\section*{Acknowledgments and disclosure of AI assistance}

The generative AI assistants Google Gemini, OpenAI ChatGPT, and Anthropic Claude were used in the development, drafting, and checking of this manuscript, including literature and priority searches and numerical verification. These tools were not relied upon as authorities for correctness. The argument has been checked step by step, and responsibility for all mathematical claims rests with the author.

\end{document}